\theoremstyle{plain}
\newtheorem{theorem}{Theorem}[section]
\newtheorem{lemma}[theorem]{Lemma}
\newtheorem{proposition}[theorem]{Proposition}
\theoremstyle{definition}
\newtheorem{assumption}[theorem]{Assumption}
\newtheorem{definition}[theorem]{Definition}
\newtheorem{example}[theorem]{Example}
\newcommand{\E}{{\mathbb{E}}}
\renewcommand{\P}{{\mathbb{P}}}
\newcommand{\R}{{\mathbb{R}}}
\newcommand{\sign}{\operatorname{sign}}
\definecolor{darkgreen}{rgb}{0,0.5,0}
\newcommand{\EM}{\phi}
\newcommand{\LS}{\Phi}
\renewcommand{\P}{{\mathbb P}}
\newcommand{\cE}{{\cal E}}
\newcommand{\cF}{{\cal F}}
\newcommand{\cL}{{\cal L}}
\newcommand{\be}{\begin{equation}}
\newcommand{\ee}{\end{equation}}
\newcommand{\bea}{\begin{eqnarray}}
\newcommand{\eea}{\end{eqnarray}}
\newcommand{\beast}{\begin{eqnarray*}}
\newcommand{\eeast}{\end{eqnarray*}}
\newcommand{\bproof}{\begin{proof}}
\newcommand{\eproof}{\end{proof}}
\title{A Numerical Method for SDEs with Discontinuous Drift}
\author{Gunther Leobacher and Michaela Sz\"olgyenyi\,
\thanks{The authors thank Evelyn Buckwar (Johannes Kepler University Linz) for valuable discussions. G. Leobacher and M. Sz\"olgyenyi are supported by the Austrian Science Fund (FWF): Project F5508-N26, which is part of the Special Research Program "Quasi-Monte Carlo Methods: Theory and Applications".}}
\begin{document}

\date{Preprint, January 2015}

\maketitle

\begin{abstract}
In this paper we introduce a transformation technique, which can on the one hand be used to prove existence and uniqueness for a class of SDEs with discontinuous drift coefficient.
One the other hand we present a numerical method based on transforming the Euler-Maruyama scheme for such a class of SDEs.
We prove convergence of order $1/2$. Finally, we present numerical examples.\\
 
\noindent Keywords: stochastic differential equations, discontinuous drift, numerical methods for stochastic differential equations\\
Mathematics Subject Classification (2010): 60H10, 65C30, 65C20 (Primary), 65L20 (Secondary)
\end{abstract}

\centerline{\underline{\hspace*{16cm}}}

\noindent G. Leobacher \\
Department of Financial Mathematics, Johannes Kepler University Linz, 4040 Linz, Austria\\

\noindent M. Sz\"olgyenyi \Letter \\
Department of Financial Mathematics, Johannes Kepler University Linz, 4040 Linz, Austria\\
michaela.szoelgyenyi@jku.at

\centerline{\underline{\hspace*{16cm}}}

\section{Introduction}
\label{sec:Introduction}

We consider a time-homogeneous stochastic differential equation (SDE)
\begin{align}\label{eq:SDEintro}
dX_t &= \mu(X_t) \, dt + \sigma (X_t) dW_t\,,
\end{align}
where $\sigma$ is a Lipschitz, $\R$-valued function and 
where $\mu$ is an $\R$-valued function that is allowed to 
have discontinuities.

It is well-known that the problem of existence and uniqueness of a 
solution to \eqref{eq:SDEintro} is readily settled by Picard iteration, if $\mu$ is Lipschitz, too.
In the case where the diffusion coefficient is bounded, Lipschitz, and (partly) uniformly elliptic, and the drift coefficient $\mu$ is only bounded and measurable,
the pioneering work by \citet{zvonkin1974} and \citet{veretennikov1981,veretennikov1984} yields existence and uniqueness of the solution.
There the result is achieved by applying a transform 
that removes the drift. This transform can in principle be computed by solving a
non-degenerate elliptic partial differential equation.
In the one-dimensional case this reduces to solving an ordinary differential equation.
Nevertheless, from the point of view of numerical treatment of \eqref{eq:SDEintro} 
this transformation method is impractical. 

In \citet{sz14} one can find an existence and uniqueness result for multi-dimensional SDEs, for the case where the drift is allowed to be
discontinuous at a hyperplane, or at a hypersurface, but is well behaved everywhere
else. Instead of removing the whole drift by a transformation, only 
the discontinuity is removed by using a different
transformation method. For computing this transform one only needs to  solve a
parametrized family of ODEs which can be done by iterated integration. This
transform is therefore much less costly from a computational point of view.\\

In this paper, we prove another existence and uniqueness result for
\eqref{eq:SDEintro} under conditions weaker than those in \citet{sz14}.
The transform constructed here is explicit and thus gives rise to  a
numerical method that does not require
solving any (partial) differential equation.\\

In setups with non-globally Lipschitz drift coefficient, various authors have
studied convergence of numerical schemes.  \citet{berkaoui2004} proves strong
convergence of the Euler-Maruyama scheme for $C^1$ drift.
\citet{hutzenthaler2012} present an explicit numerical method for which they
are able to prove strong convergence in case of an SDE with non-globally
Lipschitz coefficient.  \citet{gyongy1998} proves almost sure convergence of
the Euler-Maruyama scheme in the case where the drift satisfies  a monotonicity
condition.  \citet{halidias2008} show that the Euler-Maruyama scheme converges
strongly in case of a discontinuous monotone drift coefficient, e.g., in the
case where the drift is a Heaviside function.  
\citet{kohatsu2013} show -- in case of a
discontinuous drift -- weak convergence of a method where they first regularize
the drift and then apply the Euler-Maruyama scheme.
\citet{etore2013,etore2014} present an exact simulation algorithm for SDEs with
a bounded drift coefficient that has a discontinuity in one point, but is differentiable everywhere else.\\

In contrast to that, we allow the drift to have a finite number of jumps,
but require it to be Lipschitz otherwise.
The transformation method and the scheme based on it are the main contributions of this paper.
The transformation itself is presented in a constructive way.
Moreover, it is chosen such that all the involved functions can be computed efficiently.\\

The paper is organized as follows.
In Section \ref{sec:ex} we introduce a transformation method different to those from \cite{sz14, zvonkin1974} and show that this also leads to an existence and uniqueness result.
In Section \ref{sec:Num} we present the numerical method which is based on the transformation introduced earlier and we prove convergence of strong order $1/2$.
Finally, in Section \ref{sec:Example} we present numerical examples.

\section{Existence and uniqueness}
\label{sec:ex}

Let $(\cE,\cF,(\cF)_{t\ge0},\P)$ be a filtered probability space carrying a
standard Brownian motion $W=(W_t)_{t\ge0}$. Let $\mu, \sigma: \R\to \R$, be measurable functions.  
We study the time-homogeneous stochastic differential equation (SDE)
\begin{align}
\label{eq:SDE}dX_t &= \mu(X_t) \, dt + \sigma (X_t) dW_t\,, \qquad X_0=x\,.
\end{align}

The function $\mu$ is allowed to be discontinuous. However, the form of the discontinuities is a special one: 
we allow only discontinuities in a finite number $m$ of distinct points $\xi_1<\ldots<\xi_m$ and we assume that $\mu$ is Lipschitz otherwise.

\begin{definition}
Let $I\subseteq\R$ be an interval. We say a function $f:I\longrightarrow\R$
is piecewise Lipschitz if there are finitely many points $\xi_1<\ldots<\xi_m\in
I$ such that $f$ is Lipschitz on each of the intervals $(-\infty,\xi_1)\cap I,
(\xi_m,\infty)\cap I$ and $(\xi_k,\xi_{k+1}),\,k=1,\ldots,m$.
\end{definition}

We have not assumed anything about the behaviour of $f$ at 
$\xi_1,\ldots,\xi_m$. However, we have the following elementary lemma:

\begin{lemma}\label{lem:one-sided}
For a piecewise Lipschitz function $f$ the one-sided limits always 
exist. 
\end{lemma}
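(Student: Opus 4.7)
The plan is to reduce the existence of one-sided limits to the fact that Lipschitz functions on an open interval admit continuous extensions to its closure, by the standard Cauchy-sequence argument.

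Fix an index $k\in\{1,\ldots,m\}$ and consider the left-hand limit at $\xi_k$; the right-hand limit and the limits at the outer endpoints (if $I$ is unbounded, limits at $\pm\infty$ are not claimed here, only one-sided limits at the $\xi_k$) are handled symmetrically. Set $\xi_0:=\inf I$ by convention. By the definition of piecewise Lipschitz, $f$ is Lipschitz on $J:=(\xi_{k-1},\xi_k)\cap I$ with some constant $L$. Let $(x_n)\subset J$ be any sequence with $x_n\to\xi_k^-$. Since $(x_n)$ converges in $\R$, it is Cauchy, and the Lipschitz estimate
\[
|f(x_n)-f(x_j)|\le L\,|x_n-x_j|
\]
transfers the Cauchy property to $(f(x_n))$, so $f(x_n)$ converges in $\R$.

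Next I would check that the limit does not depend on the chosen sequence: given two sequences $(x_n),(y_n)\subset J$ with $x_n,y_n\to\xi_k^-$, interleave them to a single sequence still converging to $\xi_k^-$; by the previous step this combined sequence has a limit under $f$, hence the subsequential limits along $(x_n)$ and $(y_n)$ must agree. This common value is the left-sided limit $f(\xi_k-)$. An identical argument with the interval $(\xi_k,\xi_{k+1})\cap I$ (setting $\xi_{m+1}:=\sup I$) yields the right-sided limit $f(\xi_k+)$.

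There is no real obstacle: the content of the lemma is that Lipschitz continuity implies uniform continuity, which in turn guarantees continuous extension to the closure of the domain. The only point requiring a moment of care is to ensure that the tail of the approximating sequence lies in the single Lipschitz piece adjacent to $\xi_k$, which is immediate because $x_n\to\xi_k^-$ forces $x_n>\xi_{k-1}$ for all sufficiently large $n$.
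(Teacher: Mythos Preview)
Your proof is correct and follows essentially the same idea as the paper: use the Lipschitz property on the interval adjacent to $\xi_k$ to transfer the Cauchy property from the approximating sequence to its image under $f$. The paper's version restricts to monotone sequences and phrases convergence via absolute summability of the telescoping series $\sum (f(x_{n+1})-f(x_n))$, whereas you argue directly with Cauchy sequences and add the (useful) explicit check of sequence-independence, but the underlying mechanism is identical.
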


\begin{proof}
By symmetry it is enough to show that the left limit exists in every point.
For given $x\in I$ consider a non-decreasing sequence $(x_n)$ with
$x_n\rightarrow x$. 
According to our assumption there is some $L$ and some $n_0$ such that
$|f(x_n)-f(x_k)|\le L|x_n-x_k|$ for all $n,k \ge n_0$. Now 
\begin{align*}
\sum_{n=n_0}^\infty |f(x_{n+1})-f(x_n)|\le \sum_{n=n_0}^\infty L|x_{n+1}-x_n|
=L \sum_{n=n_0}^\infty (x_{n+1}-x_n)=L(x-x_{n_0})<\infty
\end{align*}
such that $\sum_{n=n_0}^\infty (f(x_{n+1})-f(x_n))$ converges absolutely and the limit of this series is $\lim_{n\rightarrow \infty}f(x_n)-f(x_{n_0})$. 
\end{proof}

This will enable us to transform the SDE into one with Lipschitz 
coefficients. The transform presented here is similar to the one in
\cite{sz14} but much simpler to compute.

\begin{assumption}\label{ass:hyper-all}
We assume the following for the coefficients of \eqref{eq:SDE}:
 \begin{enumerate}
 \renewcommand{\theenumi}{\roman{enumi}}
 \renewcommand{\labelenumi}{(\theenumi)}
  \item\label{ass:hyper-all-mu1}
  $\mu$ is piecewise Lipschitz;
 \item\label{ass:hyper-all-sigma3} 
$\sigma$ is globally Lipschitz;
  \item\label{ass:hyper-all-sigma2} There exists $\bar c>0$ such that 
$\sigma^2(\xi_i) \ge \bar c $ for  all $i=1,\ldots, m$. 
 \end{enumerate}
\end{assumption}

We want to stress that those assumptions are satisfied by many practical examples.
A classical one is the one-dimensional process $X$
satisfying $X_0=x$ and
\begin{align*}
d X_t=-\sign(X_t)dt + dW_t\,.
\end{align*}

Our assumptions on  $\mu$  enable us to remove the discontinuity from the drift
by a suitable transformation of $X$.
The transformation $g$ is chosen such that $g''$ is piecewise linear and is 
non-zero only on environments of the discontinuities.
This yields a piecewise quadratic function $g'$, which is constantly 1 except
on environments of the discontinuities.  Furthermore, we can guarantee
boundedness of the derivatives of $g$.

\begin{proposition}\label{prop:exgh}
Let $\xi_1,\ldots, \xi_m\in\R$ with $\xi_1<\ldots<\xi_m$ and let 
$\alpha_1,\ldots,\alpha_m,\beta_1,\ldots,\beta_m\in \R$. 
Let $0<\varkappa<1$ be fixed.

There exist functions $g,h:\R\longrightarrow \R$ such that
\begin{enumerate}
\item $g(h(z))=z\,$ for all $z\in \R$ and $h(g(x))=x\,$ for all $x\in \R$;
\item  $g,h\in C^1(\R)$;
\item $\|g'-1\|_\infty\le\varkappa$ and $\|h'-1\|_\infty\le\varkappa$;
\item $\sup_{x\in\R}|g(x)-x|\le \frac{\varkappa}{2} \max\left(1,\max_{1\le k\le m} (\xi_{k+1}-\xi_{k})\right)$ and $g(x)=x\,$ for $|x|$ sufficiently large;
\item $g'',h''$ are piecewise continuous and bounded on $\R$, 
both  one-sided limits of $g''$ at $\xi_k$ exist, and both one-sided limits
of $h''$ at $g(\xi_k)$ exist for $k=1,\ldots,m$;
\item $g''(\xi_k+)=\alpha_k$ and $g''(\xi_k-)=\beta_k$, $k=1,\ldots,m$.
\end{enumerate}
\end{proposition}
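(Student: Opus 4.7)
The plan is to construct $g$ explicitly as a piecewise cubic function that coincides with the identity outside small disjoint neighbourhoods $I_k := [\xi_k - \delta_k, \xi_k + \delta_k]$ of the jump points, with widths $\delta_k > 0$ to be chosen at the end. On each $I_k$ I would prescribe $g''$ to be piecewise linear with a single breakpoint at $\xi_k$, taking values $a_k^-, \beta_k, \alpha_k, a_k^+$ at the four knots $\xi_k - \delta_k, \xi_k^-, \xi_k^+, \xi_k + \delta_k$ respectively; then $g$ on $I_k$ is obtained by integrating $g''$ twice, using the initial conditions $g(\xi_k - \delta_k) = \xi_k - \delta_k$ and $g'(\xi_k - \delta_k) = 1$ to glue smoothly with the identity on the left. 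Property (6) then holds by inspection, property (2) because $g'$ is the integral of a bounded function, and the $g''$-part of property (5) because $g''$ is piecewise linear.

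To glue with the identity on the right of $I_k$ one needs $g'(\xi_k + \delta_k) = 1$ and $g(\xi_k + \delta_k) = \xi_k + \delta_k$. Via integration by parts these are equivalent to the two integral conditions
\[
\int_{I_k} g''(t)\, dt = 0, \qquad \int_{I_k} (\xi_k + \delta_k - t)\, g''(t)\, dt = 0,
\]
which form a $2 \times 2$ linear system in the unknowns $a_k^-, a_k^+$ whose coefficient matrix is independent of $\delta_k$ and (by direct inspection) non-singular. Solving it yields $a_k^-$ and $a_k^+$ explicitly in terms of $\alpha_k, \beta_k$ alone. Verifying solvability and writing down this explicit formula is the main, though still elementary, computation in the proof and is the only place where the specific piecewise-linear ansatz is really used.

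With the shape of $g''$ thus fixed, the remaining bounds follow by shrinking the $\delta_k$. Since $g''$ is bounded on $I_k$ by a constant depending only on $\alpha_k, \beta_k$, the estimate $|g'(x) - 1| \le 2\delta_k \|g''\|_\infty$ on $I_k$ allows me to force $\|g' - 1\|_\infty \le \varkappa/(1+\varkappa)$. I would simultaneously require $\delta_k < \tfrac{1}{2}\min(\xi_k - \xi_{k-1}, \xi_{k+1} - \xi_k)$ to keep the $I_k$ pairwise disjoint, and $2\varkappa\,\delta_k \le \tfrac{\varkappa}{2}\max(1, \max_k(\xi_{k+1} - \xi_k))$, which then gives (4) via $|g(x) - x| \le \int_{I_k} |g'-1|\, dt \le 2\varkappa\,\delta_k$.

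Finally, since $g' \ge 1 - \varkappa/(1+\varkappa) > 0$ everywhere and $g(x) = x$ for $|x|$ large, $g$ is a strictly increasing $C^1$-diffeomorphism of $\R$, so $h := g^{-1}$ exists, yielding (1). From $h' = 1/(g' \circ h)$ a direct calculation gives $\|h' - 1\|_\infty \le \varkappa$, completing (3). Differentiating once more yields $h'' = -(g'' \circ h)/(g' \circ h)^3$, which is bounded and piecewise continuous with one-sided limits at each $g(\xi_k)$, because $g''$ has one-sided limits at $\xi_k$ and $g'$ is continuous and bounded away from zero. The only step that is not routine estimation is the solvability of the $2 \times 2$ linear system that determines $a_k^\pm$.
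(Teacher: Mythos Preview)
Your proof is correct and follows the same overall strategy as the paper: build $g''$ as a compactly supported piecewise-linear function near each $\xi_k$ with the prescribed one-sided values, impose the two integral conditions so that $g'$ returns to $1$ and $g$ returns to the identity outside the support, and then shrink the width parameter to get the $\varkappa$-bounds; the properties of $h$ follow from $h'=1/(g'\circ h)$ and $h''=-(g''\circ h)/(g'\circ h)^3$.

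The difference is in the specific ansatz for $g''$. You take a single linear piece on each side of $\xi_k$, with the two endpoint values $a_k^\pm$ as unknowns, and determine them from the $2\times 2$ system (which indeed has matrix $\begin{pmatrix}1&1\\5&1\end{pmatrix}$ after scaling, giving $a_k^-=-(3\beta_k+\alpha_k)/4$, $a_k^+=-(3\alpha_k+\beta_k)/4$). The paper instead writes down a fixed six-piece linear profile on each side whose shape is engineered so that both integral conditions hold automatically for any width parameter $c_k$, and then only $c_k$ is tuned. Your construction is shorter and more systematic; the paper's is more ad hoc but has one extra feature: their $g''$ vanishes continuously at the boundary of its support, so $g''$ is continuous on all of $\R\setminus\{\xi_1,\dots,\xi_m\}$. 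Your $g''$ in general jumps at $\xi_k\pm\delta_k$ (since $a_k^\pm\ne 0$). This still satisfies property~(5) as stated, so your proof of the proposition is complete. Be aware, however, that in the subsequent existence theorem the paper tacitly uses that $g''$ has no discontinuities other than at the $\xi_k$ (so that $\mu g'+\tfrac12\sigma^2 g''$ is continuous); with your construction you would need to add extra jump points $\xi_k\pm\delta_k$ to the partition when invoking the ``piecewise Lipschitz $+$ continuous $\Rightarrow$ Lipschitz'' lemma there, which is harmless but worth noting.
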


\begin{proof}
Let $\xi_0=\xi_1-1$,  $\xi_{m+1}=\xi_m+1$, $\alpha_0=\beta_0=\alpha_{m+1}=\beta_{m+1}=0$.
We construct $g''$ on the intervals $[\xi_{k-1},\xi_k]$ for $k=1,\ldots,m+1$. 

Choose $c_k\in[\xi_{k-1},\xi_{k-1}+\frac{1}{4}(\xi_{k-1}+\xi_k))$ 
($c_k$ will depend on $\varkappa$) and let $g''$ on $[\xi_{k-1},\frac{1}{2}(\xi_{k-1}+\xi_k))$ be
\begin{align*}
 g''(x)=\begin{cases}
\alpha-\frac{3\alpha(x-\xi)}{c-\xi}\,,  & x\in (\xi,\frac{1}{2}(\xi+c)]\\
 \alpha\frac{x- c}{ c-\xi}\,, & x\in (\frac{1}{2}(\xi+ c),c]\\
 -\frac{8\alpha (x-c)}{3( c-\xi)}\,, & x \in (c, c+\frac{ c-\xi}{4}]\\
 \frac{4\alpha(\xi-c+2(x-c))}{3(c-\xi)}\,, & x \in ( c+\frac{ c-\xi}{4}, c+\frac{3( c-\xi)}{4}]\\
 -\frac{8 \alpha (\xi-2 c+x)}{3(x-\xi)}\,, & x \in ( c+\frac{3( c-\xi)}{4},2 c-\xi]\\
 0\,, & x\in (2 c-\xi,\frac{1}{2}(\xi+\xi_k)]\,,
 \end{cases}
\end{align*}
where we write $\xi=\xi_{k-1}$, $\alpha=\alpha_{k-1}$, and $c = c_k$ for brevity.
Note that
\begin{align*}
 \int_{\xi_{k-1}}^{\frac{\xi_{k-1}+\xi_k}{2}} g''(x) dx =0
 \qquad \mbox{ and } \qquad
 \int_{\xi_{k-1}}^{\frac{\xi_{k-1}+\xi_k}{2}} \int_{\xi_{k-1}}^x g''(t)dt\, dx =0\,,
\end{align*}
and that we choose $ c_k$ such that 
\[
\max_{\xi_{k-1} \le x \le \frac{\xi_{k-1}+\xi_k}{2}} \left|\int_{\xi_{k-1}}^x g''(t) dt \right| \le \varkappa/(1+\varkappa)\,.
\]
On $[\frac{1}{2}(\xi_{k-1}+\xi_k),\xi_{k}]$ we define $g''$ analog with $g''(\xi_k)=\beta_k$.
Further define $g''(x)=0$ for $x< \xi_0$ and $x>\xi_{m+1}$.

Now let
\begin{align*}
 g(x)=x+\int_0^x \int_0^t g''(s)ds \, dt\,.
\end{align*}
Then it is easy to verify that $g$ satisfies items 2 -- 6.

The function $g$ is piecewise cubic with positive derivative and 
therefore has a global inverse $h$ that can be given explicitly as a piecewise
radical function. We have 
\[
|h'(z)-1|=\left|\frac{1}{g'(h(z))}-1\right|=\frac{|1-g'(h(z))|}{|g'(h(z))|}
\le \frac{\frac{\varkappa}{1+\varkappa}}{1-\frac{\varkappa}{1+\varkappa}}=\varkappa\,.
\]
\end{proof}

The proof of Proposition \ref{prop:exgh} is constructive.
So now we know how we can construct the function for the numerical approximation.
Figure \ref{fig:gxprime} shows the functions $ g'$ and  $g''$ on the right of some
$\xi$.

\begin{figure}[ht]
\begin{center}
\begin{picture}(0,0)%
\includegraphics{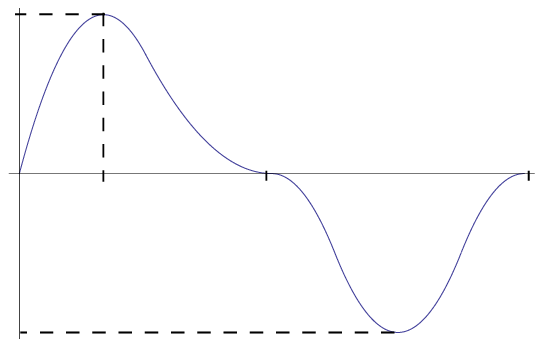}%
\end{picture}%
\setlength{\unitlength}{4144sp}%
\begingroup\makeatletter\ifx\SetFigFont\undefined%
\gdef\SetFigFont#1#2#3#4#5{%
  \reset@font\fontsize{#1}{#2pt}%
  \fontfamily{#3}\fontseries{#4}\fontshape{#5}%
  \selectfont}%
\fi\endgroup%
\begin{picture}(2832,1959)(481,-3448)
\put(496,-1726){\makebox(0,0)[b]{\smash{{\SetFigFont{6}{7.2}{\rmdefault}{\mddefault}{\updefault}{\color[rgb]{0,0,0}$1+\frac{\alpha (c-\xi)}{6}$}%
}}}}
\put(1081,-2581){\makebox(0,0)[lb]{\smash{{\SetFigFont{6}{7.2}{\rmdefault}{\mddefault}{\updefault}{\color[rgb]{0,0,0}$\frac{c+2\xi}{3}$}%
}}}}
\put(631,-2446){\makebox(0,0)[lb]{\smash{{\SetFigFont{6}{7.2}{\rmdefault}{\mddefault}{\updefault}{\color[rgb]{0,0,0}$1$}%
}}}}
\put(1936,-2536){\makebox(0,0)[b]{\smash{{\SetFigFont{6}{7.2}{\rmdefault}{\mddefault}{\updefault}{\color[rgb]{0,0,0}$c$}%
}}}}
\put(3151,-2581){\makebox(0,0)[b]{\smash{{\SetFigFont{6}{7.2}{\rmdefault}{\mddefault}{\updefault}{\color[rgb]{0,0,0}$2c-\xi$}%
}}}}
\put(811,-2536){\makebox(0,0)[lb]{\smash{{\SetFigFont{6}{7.2}{\rmdefault}{\mddefault}{\updefault}{\color[rgb]{0,0,0}$\xi$}%
}}}}
\put(541,-3211){\makebox(0,0)[b]{\smash{{\SetFigFont{6}{7.2}{\rmdefault}{\mddefault}{\updefault}{\color[rgb]{0,0,0}$1-\frac{\alpha (c-\xi)}{6}$}%
}}}}
\end{picture}%

\begin{picture}(0,0)%
\includegraphics{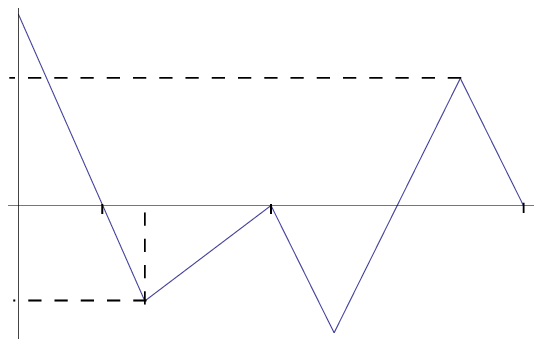}%
\end{picture}%
\setlength{\unitlength}{4144sp}%
\begingroup\makeatletter\ifx\SetFigFont\undefined%
\gdef\SetFigFont#1#2#3#4#5{%
  \reset@font\fontsize{#1}{#2pt}%
  \fontfamily{#3}\fontseries{#4}\fontshape{#5}%
  \selectfont}%
\fi\endgroup%
\begin{picture}(2784,1959)(529,-3448)
\put(656,-2608){\makebox(0,0)[lb]{\smash{{\SetFigFont{6}{7.2}{\rmdefault}{\mddefault}{\updefault}{\color[rgb]{0,0,0}$0$}%
}}}}
\put(1261,-2536){\makebox(0,0)[lb]{\smash{{\SetFigFont{6}{7.2}{\rmdefault}{\mddefault}{\updefault}{\color[rgb]{0,0,0}$\frac{\xi+c}{2}$}%
}}}}
\put(1126,-2761){\makebox(0,0)[b]{\smash{{\SetFigFont{6}{7.2}{\rmdefault}{\mddefault}{\updefault}{\color[rgb]{0,0,0}$\frac{c+2\xi}{3}$}%
}}}}
\put(676,-1726){\makebox(0,0)[lb]{\smash{{\SetFigFont{6}{7.2}{\rmdefault}{\mddefault}{\updefault}{\color[rgb]{0,0,0}$\alpha$}%
}}}}
\put(721,-2716){\makebox(0,0)[lb]{\smash{{\SetFigFont{6}{7.2}{\rmdefault}{\mddefault}{\updefault}{\color[rgb]{0,0,0}$\xi$}%
}}}}
\put(1936,-2716){\makebox(0,0)[b]{\smash{{\SetFigFont{6}{7.2}{\rmdefault}{\mddefault}{\updefault}{\color[rgb]{0,0,0}$c$}%
}}}}
\put(3106,-2671){\makebox(0,0)[b]{\smash{{\SetFigFont{6}{7.2}{\rmdefault}{\mddefault}{\updefault}{\color[rgb]{0,0,0}$2c-\xi$}%
}}}}
\put(631,-3031){\makebox(0,0)[lb]{\smash{{\SetFigFont{6}{7.2}{\rmdefault}{\mddefault}{\updefault}{\color[rgb]{0,0,0}$-\frac{\alpha}{2}$}%
}}}}
\put(631,-2041){\makebox(0,0)[lb]{\smash{{\SetFigFont{6}{7.2}{\rmdefault}{\mddefault}{\updefault}{\color[rgb]{0,0,0}$\frac{2\alpha}{3}$}%
}}}}
\end{picture}%
\end{center}
\caption{The functions $g'$ and $g''$ close to $\xi$.}\label{fig:gxprime}
\end{figure}

The function $g''$ is piecewise Lipschitz.
The function $g'$, or the constants $\alpha_1,\beta_1,\ldots,\alpha_m,\beta_m$ are chosen such that the discontinuities are removed from the
drift in a way such that the remaining term is locally Lipschitz, i.e.,
$\alpha_k=2\frac{\bar \mu_k-\mu(\xi_k+)}{\sigma^2(\xi_k)}$, $\beta_k=2\frac{\bar \mu_k-\mu(\xi_k-)}{\sigma^2(\xi_k)}$, and $\bar \mu_k=\frac{\mu(\xi_k-)+\mu(\xi_k+)}{2}$.
We define the transformed SDE by $Z=g(X)$. Then
\begin{align}\label{eq:transformed}
dZ_t &= g' dX_t + \frac{1}{2} g'' d[X]_t
= \left(\mu g'+ \frac{1}{2}\sigma^2 
g'' \right) d t + \sigma g' dW_t\,.
\end{align}

Note that $0<\varkappa<1$ was arbitrary. Therefore, if $\varkappa$ is very close
to 0, then $g$ is close to the identity.  
However, we want to stress that
there is no need to make $\varkappa$ particularly small. On the contrary, 
in general smaller 
$\varkappa$ gives rise to bigger  Lipschitz constants of the transformed
coefficients.  $\varkappa$ only needs to be
smaller than 1 to ensure that $g$ is strictly increasing. 
Note further that around each
discontinuity we build two such splines -- one on the right hand side, and one
on the left hand side of the discontinuity.  
This is done to 
reduce the Lipschitz constant of the coefficients of the transformed
equation compared to a one-sided compensation of the jumps.  

In Section \ref{sec:Example} we plot the coefficients of the transformed
SDE for Example \ref{ex:ex2}. \\

Now we are ready to prove the existence and uniqueness result, which is
obtained by applying the transformation $g$.

\begin{theorem}\label{th:existence}
Under Assumption \ref{ass:hyper-all} we have that for every $x\in\R$ there exists
a unique global strong solution $Z$ to the SDE 
\begin{align*}
dZ_t &= \tilde \mu(Z_t)dt + \tilde \sigma (Z_t) dW_t\,,\\
Z_0&= g(x)\,,
\end{align*}
where $\tilde \mu(z):= \mu(h(z))g'(h(z)) + \frac{1}{2}\sigma^2(h(z)) 
g''(h(z))$ and $\tilde \sigma (z) := \sigma(h(z)) g'(h(z))$.
Furthermore, 
$h(Z)$ is a unique global strong solution to \eqref{eq:SDE}.
\end{theorem}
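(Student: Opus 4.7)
The plan is to verify that the coefficients $\tilde\mu$ and $\tilde\sigma$ of the transformed equation are globally Lipschitz, so that existence and uniqueness of a global strong solution $Z$ follow from the classical theorem, and then to apply a generalized It\^o formula to $h(Z)$ in order to identify it as a solution of \eqref{eq:SDE}. Uniqueness for the original SDE would then be inherited by applying $g$ to any other solution and invoking uniqueness of $Z$.

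First I would fix the constants of Proposition~\ref{prop:exgh} to
\[
\alpha_k = 2\frac{\bar\mu_k - \mu(\xi_k+)}{\sigma^2(\xi_k)}, \qquad \beta_k = 2\frac{\bar\mu_k - \mu(\xi_k-)}{\sigma^2(\xi_k)},
\]
with $\bar\mu_k = \tfrac12(\mu(\xi_k-)+\mu(\xi_k+))$ as indicated in the text preceding the theorem; this is admissible thanks to Lemma~\ref{lem:one-sided} and the non-degeneracy condition in Assumption~\ref{ass:hyper-all}. Since $g'(\xi_k)=1$ and $g''(\xi_k\pm)=\alpha_k,\beta_k$, a short computation yields $\tilde\mu(g(\xi_k)+) = \tilde\mu(g(\xi_k)-) = \bar\mu_k$, so the jumps of $(\mu g')\circ h$ and of $\tfrac12(\sigma^2 g'')\circ h$ cancel at the exceptional points $g(\xi_k)$. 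Between any two consecutive such points, $\tilde\mu$ is Lipschitz as a sum of products of Lipschitz factors that are either bounded or of compact support (recall $g''$ vanishes outside a bounded interval, so $\sigma^2 g''$ is globally bounded and Lipschitz). Since a continuous function that is Lipschitz on two adjacent closed intervals is Lipschitz on their union, $\tilde\mu$ is globally Lipschitz. For $\tilde\sigma$ I would decompose $\tilde\sigma(z) = \sigma(h(z)) + \sigma(h(z))(g'(h(z))-1)$: the first summand is globally Lipschitz and the second is a product of a Lipschitz function with a bounded Lipschitz function of compact support, hence globally Lipschitz.

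Standard theory then provides a unique global strong solution $Z$ to the transformed SDE with $Z_0 = g(x)$. Setting $X := h(Z)$, I would apply It\^o's formula to $h$. Since $h\in C^1(\R)$ has an absolutely continuous derivative with bounded density $h''$ (item~5 of Proposition~\ref{prop:exgh}), the generalized It\^o formula yields
\[
dX_t = h'(Z_t)\,dZ_t + \tfrac12 h''(Z_t)\,d\langle Z\rangle_t.
\]
Substituting the transformed dynamics and using $h'(g(x))g'(x) = 1$ together with its differentiated form $h''(g(x))g'(x)^2 + h'(g(x))g''(x) = 0$, the diffusion coefficient collapses to $\sigma(X_t)$ and the drift to $\mu(X_t)$, so $X$ solves \eqref{eq:SDE}. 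For uniqueness, if $X'$ is any other solution with $X'_0=x$, the same generalized It\^o formula applied to $g(X')$ shows that $g(X')$ solves the transformed SDE with initial value $g(x)$; uniqueness of $Z$ then forces $g(X') = Z$, and hence $X' = h(Z) = X$.

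The only delicate point is the use of It\^o's formula for the functions $g$ and $h$, which are only $C^1$ and fail to be $C^2$ at the finitely many points $\xi_k$ and $g(\xi_k)$. This can be handled by a standard mollification of $g''$ (respectively $h''$) combined with dominated convergence, relying on the uniform bound on the second derivative; the non-degeneracy $\sigma^2(\xi_k)\ge\bar c$ ensures that the process spends zero Lebesgue measure of time at any single point, so the ambiguity in the value of $g''$ and $h''$ at the exceptional points is immaterial. Apart from this technical justification, the argument is a direct calculation resting entirely on the properties of $g$ and $h$ established in Proposition~\ref{prop:exgh}.
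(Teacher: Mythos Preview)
Your proposal is correct and follows essentially the same approach as the paper: establish that $\tilde\mu$ and $\tilde\sigma$ are globally Lipschitz (continuity at the exceptional points by the specific choice of $\alpha_k,\beta_k$, piecewise Lipschitz elsewhere, then Lemma~\ref{th:lipschitz}), invoke the classical existence and uniqueness theorem for $Z$, and apply a generalized It\^o formula to $h$ to recover the solution of \eqref{eq:SDE}. The paper first proves the Lipschitz property for $\hat\mu(x)=\mu(x)g'(x)+\tfrac12\sigma^2(x)g''(x)$ and $\hat\sigma(x)=\sigma(x)g'(x)$ in the $x$-variable and then composes with the Lipschitz map $h$, whereas you work directly in the $z$-variable; this is only a cosmetic difference. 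Your decomposition $\tilde\sigma=\sigma\circ h+(\sigma\circ h)(g'\circ h-1)$ is in fact slightly cleaner than the paper's argument, which implicitly treats $\sigma$ as bounded, and you spell out the uniqueness step for \eqref{eq:SDE} (apply $g$ to a competing solution and use uniqueness of $Z$), which the paper leaves implicit. For the generalized It\^o formula the paper simply cites \cite[Problem~7.3]{karatzas1991}; your mollification sketch is an equivalent justification.
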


For the proof we need the following elementary lemma:

\begin{lemma}\label{th:lipschitz}
Let $f:\R\longrightarrow\R$ be piecewise Lipschitz and continuous.

Then $f$ is Lipschitz on $\R$. 
\end{lemma}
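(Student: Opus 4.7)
The plan is to take $L$ to be the maximum of the Lipschitz constants of $f$ on each of the pieces $(-\infty,\xi_1), (\xi_1,\xi_2),\ldots,(\xi_m,\infty)$ and show that this $L$ works as a global Lipschitz constant for $f$. The work splits cleanly into two steps: first extend each piecewise estimate to the closure of the corresponding interval using continuity, then glue the estimates across the breakpoints with the triangle inequality.

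For the first step, I would argue that if $f$ is Lipschitz with constant $L$ on an open interval $(a,b)$ and continuous on $[a,b]$, then the same bound persists on $[a,b]$: given $x,y\in[a,b]$, choose sequences $(x_n),(y_n)\subset(a,b)$ with $x_n\to x$ and $y_n\to y$, use $|f(x_n)-f(y_n)|\le L|x_n-y_n|$, and pass to the limit using continuity of $f$. Applied to each of the pieces listed in the definition of piecewise Lipschitz, this upgrades the estimates so that $f$ is Lipschitz with constant $L$ on each of the closed intervals $(-\infty,\xi_1], [\xi_1,\xi_2],\ldots,[\xi_m,\infty)$.

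For the second step, take arbitrary $x<y$ in $\mathbb{R}$. Let $\xi_{i_1}<\cdots<\xi_{i_k}$ be the breakpoints strictly between $x$ and $y$ (possibly none, in which case the result is immediate). Then $x,\xi_{i_1}$ lie in one closed piece, $\xi_{i_j},\xi_{i_{j+1}}$ in the next, and $\xi_{i_k},y$ in the last. By the triangle inequality and the step-one estimate applied in each piece,
\begin{align*}
|f(y)-f(x)| \le |f(\xi_{i_1})-f(x)| + \sum_{j=1}^{k-1}|f(\xi_{i_{j+1}})-f(\xi_{i_j})| + |f(y)-f(\xi_{i_k})| \le L(y-x),
\end{align*}
since the sum of the lengths telescopes to $y-x$.

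There is no real obstacle here; the only subtle point is the boundary extension in the first step, which relies crucially on the continuity hypothesis (without it, e.g., $f=\mathbf 1_{[0,\infty)}$ is piecewise Lipschitz but not globally Lipschitz). Once that is in place the triangle-inequality telescoping is entirely routine.
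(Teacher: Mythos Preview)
Your proof is correct and follows essentially the same approach as the paper: extend the piecewise Lipschitz bounds to the closed intervals via continuity, take $L$ to be the maximum of the local constants, and then telescope through the breakpoints with the triangle inequality. If anything, you give slightly more detail on the closure step (the sequence argument) than the paper does.
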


\begin{proof}
There exist $a_1<\ldots<a_m$ such that 
$f$ is Lipschitz on each of the 
open intervals $(-\infty,a_1),(a_1,a_2),\ldots,(a_m,\infty)$. From 
the continuity of $f$ we conclude that $f$ is Lipschitz on each of the 
closed intervals $(-\infty,a_1],[a_1,a_2],\ldots,[a_m,\infty)$.

Let $L_0,\ldots,L_m$ denote the respective Lipschitz constants and let
$L=\max(L_0,\ldots,L_m)$.
Now let $x,y\in \R$. W.l.o.g.~$y<x$. 
If $x$ and $y$ are in the same interval, then it is obvious
that $|f(x)-f(y)|\le L |x-y|$. Otherwise, we have
$y\le a_k<...<a_{j}\le x$ with $k$ chosen minimal and $j$ chosen maximal.
\begin{align*}
|f(x)-f(y)|
&=|f(x)-f(a_j)+f(a_j)-f(a_{j-1})+\ldots+f(a_k)-f(y)|\\
&\le|f(x)-f(a_j)|+|f(a_j)-f(a_{j-1})|+\ldots+|f(a_k)-f(y)|\\
&\le L|x-a_j|+L|a_j-a_{j-1}|+\ldots+L|a_k-y|\\
&= L(x-a_j)+L(a_j-a_{j-1})+\ldots+L(a_k-y)\\
&= L(x-y)=L|x-y|\,.
\end{align*}
Thus the assertion is proven.
\end{proof}

\begin{proof}[Proof of Theorem \ref{th:existence}]
Define $\hat \mu(x):= \mu(x)g'(x) + \frac{1}{2} \sigma^2(x)
g''(x)$ and $\hat \sigma (x) := \sigma(x) g'(x)$.

Due to Assumption \ref{ass:hyper-all} (\ref{ass:hyper-all-sigma3}), $\sigma$ is
globally Lipschitz.  Furthermore, $g'$ is differentiable with bounded
derivative, which implies Lipschitz continuity.  So $\sigma$ and $g'$ are both
Lipschitz and bounded, thus $\hat \sigma=\sigma g'$ is  Lipschitz.

To show that $\hat \mu$ is Lipschitz, we first note that $g$ is
chosen in a way such that $\hat \mu$ is continuous.
We observe that for $|x|$ large it holds that $g'(x)=1$ and $g''(x)=0$.
Thus there exists $a>\max(|\xi_1|,|\xi_m|)$ such that $\hat \mu$ is
Lipschitz on $(-\infty,-a),(a,\infty)$.

Furthermore, $\hat \mu$ is Lipschitz on the intervals
$(-a,\xi_1),(\xi_1,\xi_2),\ldots,(\xi_{m-1},\xi_m),(\xi_m,a)$ 
as a sum of products of bounded
Lipschitz functions. That means that $\hat \mu$ is piecewise Lipschitz.

Hence $\hat \mu$ is  Lipschitz by Lemma \ref{th:lipschitz}.

Since $g'$ is bounded away
from 0, $h'$ is bounded and thus $h$ is Lipschitz.
Thus, also $\tilde \mu, \tilde \sigma$  are Lipschitz.\\

From the Lipschitz continuity of $\tilde \mu$ and $\tilde \sigma$ 
we get existence and uniqueness of a global strong solution to
\eqref{eq:transformed} from \cite[Theorem 3.1]{mao2007}. Furthermore, note
that It\^o's formula holds for $h$ by \cite[Problem 7.3]{karatzas1991}.
By applying It\^o's formula to $h$ we get that there exists a unique global strong solution to \eqref{eq:SDE}.
\end{proof}

\section{Numerical scheme}
\label{sec:Num}

The numerical scheme is based on the transformation $g$ introduced in Section \ref{sec:ex}.
In fact, we transform the initial value by applying $g$ and then solve the transformed SDE by applying the Euler-Maruyama method.
This converges strongly with order $1/2$ to the solution of the transformed SDE.
Then we apply $h$ to get the solution of our original SDE.\\

For any $t\ge 0$ and $\delta>0$ we denote the $n$-step
Euler-Maruyama scheme with step-size $\delta$ recursively by
\begin{align*}
\EM^1(z,t,\delta)&:=z+\tilde \mu(z)\delta 
+\tilde \sigma(z)\left(W_{t+\delta}-W_t\right) \\
\EM^{n+1}(z,t,\delta)&:=\EM^n(\EM^1(z,t,\delta),t+\delta,\delta)\,.
\end{align*}

The scheme for solving \eqref{eq:SDE} looks as follows:
\begin{align}\label{eq:LS}
\LS(x,t,T,n):=h(\EM^{n}(g(X_t),t,T/n))\,.
\end{align}

Now we are ready to prove convergence of scheme \eqref{eq:LS}.

\begin{theorem}\label{th:conv}
Let Assumptions \ref{ass:hyper-all} hold.

Then scheme \eqref{eq:LS} converges with order $\gamma=1/2$ to the solution of \eqref{eq:SDE}, i.e.,
\begin{align*}
 \E \left( \| X_T-\LS(x,0,T,n)\|^2\right)^{1/2}\le C \delta^{\gamma}\,.
 \end{align*}
\end{theorem}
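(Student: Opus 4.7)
The plan is to reduce the claim to the classical strong convergence theorem for the Euler--Maruyama scheme applied to an SDE with globally Lipschitz coefficients, using the transformation $g$ as a bridge.

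First, I would set $Z_t:=g(X_t)$ and observe that, by Theorem~\ref{th:existence}, $Z$ is the unique global strong solution of the transformed SDE with drift $\tilde\mu$ and diffusion $\tilde\sigma$, both of which were shown there to be globally Lipschitz. Writing $\delta=T/n$, a direct comparison with the definition of $\phi^n$ shows that $\phi^n(g(x),0,T/n)$ is \emph{exactly} the standard $n$-step Euler--Maruyama approximation of $Z_T$ with step-size $\delta$ and initial value $Z_0=g(x)$. Hence the classical strong convergence theorem for Euler--Maruyama under globally Lipschitz (hence linear-growth) coefficients, e.g.\ \cite[Theorem 10.2.2]{mao2007}, yields a constant $C_1>0$, depending only on $T$, $g(x)$ and the Lipschitz constants of $\tilde\mu,\tilde\sigma$, such that
\begin{align*}
\E\bigl(|Z_T-\phi^n(g(x),0,T/n)|^2\bigr)^{1/2}\le C_1\,\delta^{1/2}.
\end{align*}

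Next, I would transfer this estimate back to the original process via $h$. Because $X_T=h(Z_T)$ and $\Phi(x,0,T,n)=h\bigl(\phi^n(g(x),0,T/n)\bigr)$, and since Proposition~\ref{prop:exgh}(3) gives $\|h'-1\|_\infty\le\varkappa<1$, the function $h$ is globally Lipschitz with constant $L_h\le 1+\varkappa$. Therefore
\begin{align*}
\E\bigl(|X_T-\Phi(x,0,T,n)|^2\bigr)^{1/2}
&=\E\bigl(|h(Z_T)-h(\phi^n(g(x),0,T/n))|^2\bigr)^{1/2}\\
&\le L_h\,\E\bigl(|Z_T-\phi^n(g(x),0,T/n)|^2\bigr)^{1/2}
\le L_h C_1\,\delta^{1/2},
\end{align*}
and setting $C:=L_h C_1$ gives the desired bound with $\gamma=1/2$.

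There is no real obstacle beyond verifying the hypotheses of the cited Euler--Maruyama theorem; the only point that deserves a line of comment is that $\tilde\mu$ and $\tilde\sigma$ are not merely Lipschitz but also satisfy the linear growth condition automatically (as a consequence of the Lipschitz property and the boundedness of $g', g''$ from Proposition~\ref{prop:exgh}), so that the moment estimates underlying the $1/2$-order bound are available. Everything else is a one-line consequence of the Lipschitz continuity of $h$.
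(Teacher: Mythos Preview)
Your proposal is correct and follows essentially the same route as the paper: transform, apply the classical Euler--Maruyama $L^2$-error bound to the Lipschitz SDE for $Z$, and pull back via the Lipschitz map $h$. The only discrepancy is the reference---the relevant Theorem~10.2.2 is in \cite{kloeden1992}, not \cite{mao2007}; your additional remarks on the explicit bound $L_h\le 1+\varkappa$ and on linear growth are fine but not needed beyond what the paper uses.
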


\begin{proof}
\begin{align*}
 \E \left( \| X_T-\LS(x,0,T,n)\|^2\right)^{1/2}
 &= \E \left( \| h(g(X_T))-h(\EM^{n}(g(x),0,T/n))|^2\right)^{1/2}\\
&\le L_h \E\left(\|Z_{T}-\EM^{n}(g(x),0,T/n)\|^2\right)^{1/2}\,,
 \end{align*}
where $Z_T$ is the exact solution to the transformed SDE.
From \cite[Theorem 10.2.2]{kloeden1992} and as $\tilde \mu$ and $\tilde \sigma$
are Lipschitz 
we have \begin{align*}\label{eq:ESconv}
\E\left(\|Z_T-\EM^{n}(z,0,\delta)\|^2\right)\le \tilde C \delta\,.
\end{align*}

Altogether this yields
\begin{align*}
 \E \left( \| X_T-\LS(x,0,T,n)\|^2\right)^{1/2}
 \le C \delta^{1/2}\,,
 \end{align*}
where $C=L_h \sqrt{\tilde C}$, and $\delta=T/n$.

\end{proof}

\section{Examples}
\label{sec:Example}

In this section we present numerical examples, where we show the exact choice of the transformation and the transformed parameters.
Furthermore, we investigate the convergence of our method and compare it to the Euler-Maruyama method.

\begin{example} \label{ex:ex1}
First, we consider the process $X$ mentioned in the beginning and satisfying
the equation
\[
dX_t=-\sign(X_t) dt + d W_t\,.
\]
That is, $\mu(x)=-\sign(x)$ and
$\sigma(x)\equiv 1$. We choose different values of $\varkappa=1/16,1/64,1/256$.
Figure \ref{fig:skewplot} shows the estimated $\cL^2$-error between two consecutive discretizations of the transformed Euler-Maruyama method (EMT $1/\varkappa$)
in comparison to crude Euler-Maruyama (EM). This means we calculate $\log \sqrt{\hat E\left(\left(X_T^{(k)} - X_t^{(k-1)}\right)^2\right)}$ and plot it over $\log \delta ^{(k)}$,
where $X_T^{(k)}$ is the numerical approximation with stepsize $\delta=\delta^{(k)}$ and $\hat E$ is an estimator of the mean value using 1024 paths.

\begin{figure}
\begin{center}
\includegraphics[scale=0.7]{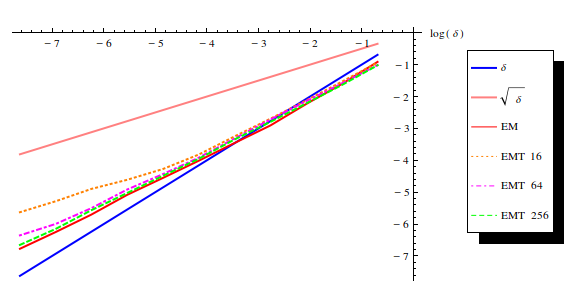}
\caption{The estimated $\cL^2$-error for different values of $\varkappa$ for Example \ref{ex:ex1}.}\label{fig:skewplot}
\end{center}
\end{figure}

We see that for this parameter choice the crude Euler-Maruyama method is even better than the transformed method.
Furthermore, we see that the estimated $\cL^2$-error for the crude Euler-Maruyama method seems to be of order $1$.
The reason for this is probably that
in case of Lipschitz coefficients Euler-Maruyama converges with strong order 1, if the diffusion parameter is constant.
However, we cannot expect strong order 1 for the transformed method, since $\tilde \sigma$ is not constant, even if $\sigma$ is constant.

\end{example}

In the next example $\mu$ has more than one discontinuity.
In addition to the estimated $\cL^2$-error between two consecutive discretizations we will show how the transformation looks like.

\begin{example}\label{ex:ex2}
Now, we consider the SDE
\[
dX_t=\mu(X_t) dt + \sigma(X_t) d W_t\,.
\]
The coefficients are chosen as follows:
\begin{align*}
 \mu (x) &=\begin{cases}
  x-2\,, & x < -1\\
  2\,,  &-1  \le x < -0.5\\
  1-x^2\,, & -0.5 \le x<0\\
  x^2\,,  &0 \le x<1\\
  -x-1\,, & x \ge 1\,,
 \end{cases}\\
 \sigma(x)&=\frac{1}{2}\left(1+\frac{1}{x^2+1}\right)\,.
\end{align*}

Figure \ref{fig:musig} shows the parameters $\mu,\sigma$, Figure \ref{fig:g} shows the derivatives of the transformation $g$, and Figure \ref{fig:tilde} shows the parameters $\tilde \mu, \tilde\sigma$ of the transformed SDE.

\begin{figure}
\begin{center}
\includegraphics[scale=0.5]{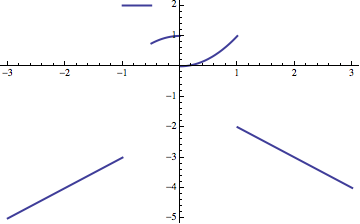}
\includegraphics[scale=0.5]{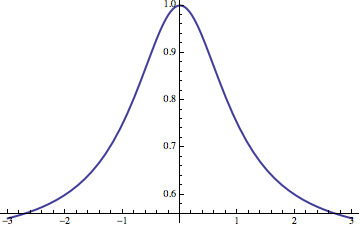}
\caption{The functions $\mu$ and $\sigma$.}\label{fig:musig}
\end{center}
\end{figure}

\begin{figure}
\begin{center}
\includegraphics[scale=0.4]{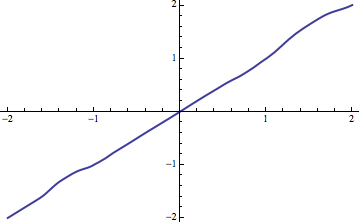}
\includegraphics[scale=0.4]{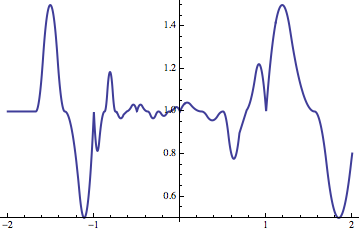}
\includegraphics[scale=0.4]{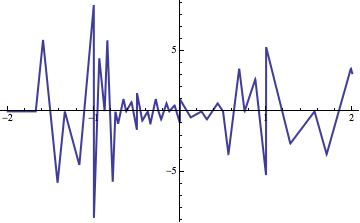}
\caption{The functions $g$, $g'$, and $g''$.}\label{fig:g}
\end{center}
\end{figure}

\begin{figure}
\begin{center}
\includegraphics[scale=0.6]{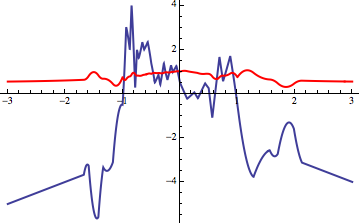}
\caption{The transformed parameters $\tilde \mu$ (blue) and $\tilde \sigma$ (red).}\label{fig:tilde}
\end{center}
\end{figure}

Again, we choose different values of $\varkappa=1/16,1/64,1/256$.
Figure \ref{fig:fancyplot} shows the estimated $\cL^2$-error between two consecutive discretizations of the transformed Euler-Maruyama method (EMT $1/\varkappa$)
in comparison to crude Euler-Maruyama (EM).

\begin{figure}
\begin{center}
\includegraphics[scale=0.7]{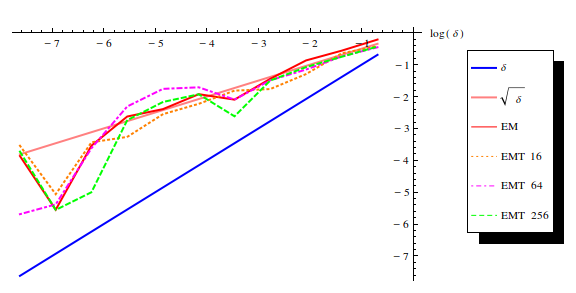}
\caption{The estimated $\cL^2$-error for different values of $\varkappa$ for Example \ref{ex:ex2}.}\label{fig:fancyplot}
\end{center}
\end{figure}

In this example we reach the calculated order $1/2$.
Whether the transformed method is better than Euler-Maruyama depends on the choice of $\varkappa$.

\end{example}

As the last example we solve an SDE appearing in insurance mathematics.

\begin{example}[Threshold dividend strategy]\label{ex:ex3}
In \cite{asmussen1997} the authors study the dividend maximization problem from risk theory in a diffusion model. They find that the optimal dividend policy is of threshold type with constant threshold level $b$.
This means that dividends should be paid at the maximum rate whenever the surplus process $X$ of the insurance company exceeds $b$. Otherwise, no dividends should be paid.
Our numerical scheme enables us to simulate the optimally controlled surplus process from \cite{asmussen1997}:
\[
dX_t=\left(\theta-K \, 1_{\lbrace x \ge b\rbrace}\right) dt + \sigma d W_t\,,
\]
where $\theta$ is the drift of the uncontrolled process, and $K$ is the maximum dividend rate.
We choose $\theta=1, K=1.8, \sigma=1$, and $b\approx 0.895635$ is the optimal threshold level, which can be calculated as in \cite{asmussen1997}.

Figure \ref{fig:barrplot} shows the estimated $\cL^2$-error between two consecutive discretizations of the transformed Euler-Maruyama method (EMT $1/\varkappa$)
in comparison to crude Euler-Maruyama (EM) for $\varkappa=1/16,1/64,1/256$.

\begin{figure}
\begin{center}
\includegraphics[scale=0.7]{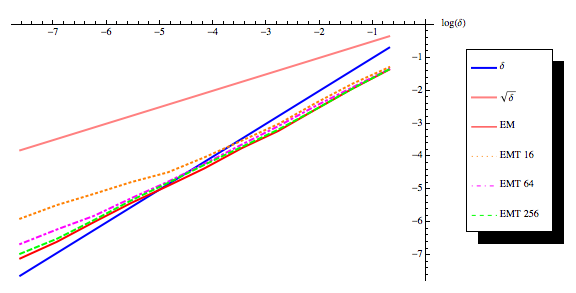}
\caption{The estimated $\cL^2$-error for different values of $\varkappa$ for Example \ref{ex:ex3}.}\label{fig:barrplot}
\end{center}
\end{figure}

\end{example}

Threshold type control strategies appear frequently when solving stochastic optimization problems in various fields of applied mathematics. Therefore, our scheme potentially serves for a wide range of applications.


\begin{thebibliography}{15}
\providecommand{\natexlab}[1]{#1}
\providecommand{\url}[1]{\texttt{#1}}
\expandafter\ifx\csname urlstyle\endcsname\relax
  \providecommand{\doi}[1]{doi: #1}\else
  \providecommand{\doi}{doi: \begingroup \urlstyle{rm}\Url}\fi

\bibitem[Asmussen and Taksar(1997)]{asmussen1997}
S.~Asmussen and M.~Taksar.
\newblock {Controlled Diffusion Models for Optimal Dividend Pay-Out}.
\newblock \emph{Insurance: Mathematics and Economics}, 20\penalty0
  (1):\penalty0 1--15, 1997.

\bibitem[Berkaoui(2004)]{berkaoui2004}
A.~Berkaoui.
\newblock {{E}uler Scheme for Solutions of Stochastic Differential Equations
  with Non-{L}ipschitz Coefficients}.
\newblock \emph{Portugaliae Mathematica}, 61\penalty0 (4):\penalty0 461--478,
  2004.

\bibitem[\'Etor\'e and Martinez(2013)]{etore2013}
P.~\'Etor\'e and M.~Martinez.
\newblock {Exact Simulation for Solutions of One-Dimensional Stochastic
  Differential Equations Involving a Local Time at Zero of the Unknown
  Process}.
\newblock \emph{Monte Carlo Methods and Applications}, 19\penalty0
  (1):\penalty0 41--71, 2013.

\bibitem[\'Etor\'e and Martinez(2014)]{etore2014}
P.~\'Etor\'e and M.~Martinez.
\newblock {Exact Simulation for Solutions of One-Dimensional Stochastic
  Differential Equations with Discontinuous Drift}.
\newblock \emph{ESAIM: Probability and Statistics}, 2014.
\newblock Accepted manuscript, arXiv 1301.3019.

\bibitem[Gy\"ongy(1998)]{gyongy1998}
I.~Gy\"ongy.
\newblock {A Note on {E}uler's Approximation}.
\newblock \emph{Potential Analysis}, 8:\penalty0 205--216, 1998.

\bibitem[Halidias and Kloeden(2008)]{halidias2008}
N.~Halidias and P.~E. Kloeden.
\newblock {A Note on the {E}uler–{M}aruyama Scheme for Stochastic
  Differential Equations with a Discontinuous Monotone Drift Coefficient}.
\newblock \emph{BIT Numerical Mathematics}, 48\penalty0 (1):\penalty0 51--59,
  2008.

\bibitem[Hutzenthaler et~al.(2012)Hutzenthaler, Jentzen, and
  Kloeden]{hutzenthaler2012}
M.~Hutzenthaler, A.~Jentzen, and P.~E. Kloeden.
\newblock {Strong Convergence of an Explicit Numerical Method for SDEs with
  Nonglobally {L}ipschitz Continuous Coefficients}.
\newblock \emph{The Annals of Applied Probability}, 22\penalty0 (4):\penalty0
  1611--1641, 2012.

\bibitem[Karatzas and Shreve(1991)]{karatzas1991}
I.~Karatzas and S.~E. Shreve.
\newblock \emph{{{B}rownian Motion and Stochastic Calculus}}.
\newblock Graduate Texts in Mathematics. Springer-Verlag, New York, second
  edition, 1991.

\bibitem[Kloeden and Platen(1992)]{kloeden1992}
P.~E. Kloeden and E.~Platen.
\newblock \emph{{Numerical Solutions of Stochastic Differential Equations}}.
\newblock Stochastic Modelling and Applied Probability. Springer Verlag,
  Berlin-Heidelberg, 1992.

\bibitem[Kohatsu-Higa et~al.(2013)Kohatsu-Higa, Lejay, and Yasuda]{kohatsu2013}
A.~Kohatsu-Higa, A.~Lejay, and K.~Yasuda.
\newblock {Weak Approximation Errors for Stochastic Differential Equations with
  Non-Regular Drift}.
\newblock 2013.
\newblock Preprint, Inria, hal-00840211.

\bibitem[Leobacher et~al.(2015)Leobacher, Sz\"olgyenyi, and Thonhauser]{sz14}
G.~Leobacher, M.~Sz\"olgyenyi, and S.~Thonhauser.
\newblock {On the Existence of Solutions of a Class of {SDE}s with
  Discontinuous Drift and Singular Diffusion}.
\newblock \emph{Electronic Communications in Probability}, 20\penalty0
  (6):\penalty0 1--14, 2015.

\bibitem[Mao(2007)]{mao2007}
X.~Mao.
\newblock \emph{{Stochastic Differential Equations and Applications}}.
\newblock Horwood Publishing Limited, New Delhi, second edition, 2007.

\bibitem[Veretennikov(1981)]{veretennikov1981}
A.~YU. Veretennikov.
\newblock {On Strong Solutions and Explicit Formulas for Solutions of
  Stochastic Integral Equations}.
\newblock \emph{Mathematics of the USSR Sbornik}, 39\penalty0 (3):\penalty0
  387--403, 1981.

\bibitem[Veretennikov(1984)]{veretennikov1984}
A.~YU. Veretennikov.
\newblock {On Stochastic Equations with Degenerate Diffusion with Respect to
  Some of the Variables}.
\newblock \emph{Mathematics of the USSR Izvestiya}, 22\penalty0 (1):\penalty0
  173--180, 1984.

\bibitem[Zvonkin(1974)]{zvonkin1974}
A.~K. Zvonkin.
\newblock {A Transformation of the Phase Space of a Diffusion Process that
  Removes the Drift}.
\newblock \emph{Mathematics of the USSR Sbornik}, 22\penalty0 (129):\penalty0
  129--149, 1974.

\end{thebibliography}
\end{document}